\newtheorem{definition}{Definition}
\newtheorem{lemma}[definition]{Lemma}
\newtheorem{theorem}[definition]{Theorem}
\newtheoremstyle{r}
{}
{}
{\normalfont}
{}
{\scshape}
{.}
{ }
{}
\theoremstyle{r}
\newtheorem*{remark}{Remark}
\numberwithin{definition}{section}
\title{Universal power series of Seleznev with parameters in several variables}
\author{K. Maronikolakis and G. Stamatiou}
\date{}
\begin{document}
	\maketitle
	\begin{abstract}
		We generalize the universal power series of Seleznev to several variables and we allow the coefficients to depend on parameters. Then, the approximable functions may depend on the same parameters. The universal approximation holds on products $K = \displaystyle\prod_{i = 1}^d K_i$, where $K_i \subseteq \mathbb{C}$ are compact sets and $\mathbb{C} \setminus K_i$ are connected, $i = 1, \dots, d$ and $0 \notin K$. On such $K$ the partial sums approximate uniformly any polynomial. Finally, the partial sums may be replaced by more general expressions. The phenomenon is topologically and algebraically generic.
	\end{abstract}
	\textbf{AMS classification numbers:} 30K05, 32A05.\\
	\textbf{Key words and phrases:} Universal Taylor series, Baire's theorem, generic property, Mergelyan's theorem, product of planar sets.
	\section{Introduction}
	A classical result of Seleznev \cite{Seleznev} states that there exists a formal power series $\sum\limits_{k = 0}^{\infty}a_kz^k,$\break$ a_k \in \mathbb{C}$, such that the partial sums $S_n(z) = \sum\limits_{k = 0}^{n}a_kz^k$ approximate every polynomial uniformly on any compact set $K \subseteq \mathbb{C} \setminus \{0\}$ with $\mathbb{C} \setminus K$ connected. The set of such series is proven to be $G_\delta$ and dense in the set of all power series, identified with the sequence of their coefficients, that is, in $\mathbb{C}^{\aleph_0}$ endowed with the Cartesian topology (\cite{Bayart}). The above set also contains a dense in $\mathbb{C}^{\aleph_0}$ vector space except 0.
	
	If we allow each coefficient $a_k$ to depend on some parameters $w = (w_1, \dots, w_r)$, then the functions $h(z)$ to be approximated can also depend on these parameters $h(w_1, \dots, w_r, z)$ and the approximation is done by the partial sums $S_n(w_1, \dots, w_r, z) = \sum\limits_{k = 0}^{n}a_k(w_1, \dots, w_r)z^k$, $ n = 0, 1,\dots, z \in \mathbb{C}$.
	
	Each one of the parameters $w_i$ varies on a simply connected domain $\Omega_i \subseteq \mathbb{C}, i = 1, \dots, r$. Thus, each $a_k, k = 0, 1, \dots$ is a holomorphic function on $\Omega = \Omega_1 \times \dots \times \Omega_r$. Each power series $\sum\limits_{k = 0}^{\infty}a_k(w_1, \dots, w_r)z^k$ can be identified with the sequence of holomorphic functions $(a_k)_{k = 0}^\infty$ in the space $H(\Omega)^{\aleph_0}$ endowed with the Cartesian topology when the space $H(\Omega)$ is endowed with the topology of uniform convergence on all compact subsets of $\Omega$. This is a Fr\'echet space and Baire's Theorem is at our disposal. The functions to be approximated will be the functions of the form $h(w_1, \dots, w_r, z)$ which can be approximated by polynomials of $r + 1$ variables uniformly on each set of the form $L_1 \times \dots \times L_r \times K$, where $L_i$ is any compact subset of $\Omega_i, i = 1, \dots, r$ and $K \subseteq \mathbb{C} \setminus \{0\}$ is compact and $\mathbb{C} \setminus K$ is connected. Since the sets $\Omega_i$ are simply connected domains in $\mathbb{C}$, without loss of generality we may assume that $\mathbb{C} \setminus L_i$ are connected for all $i = 1, \dots, r$. Furthermore, one can show that there exist power series $\sum\limits_{k = 0}^{\infty}a_k(w_1, \dots, w_r)z^k = (a_k)_{k = 0}^\infty \in H(\Omega)^{\aleph_0}$ with the above properties and their set is a $G_\delta$ and dense subset of $H(\Omega)^{\aleph_0}$ and contains a dense vector space except 0.
	
	But which are the approximable functions $h(w_1, \dots, w_r, z)$? The answer is given in \cite{Falco}, where, for every compact set $L \subseteq \mathbb{C}^m$ a new function algebra $A_D(L)$ is introduced, containing the set $\overline{\mathcal{O}}(L)$ of uniform on $L$ limits of sequences $f_n : V_n \rightarrow \mathbb{C}$ of holomorphic functions in varying open sets $V_n$ such that $L \subseteq V_n \subseteq \mathbb{C}^m$ and where $A_D(L)$ is contained in the classical algebra $A(L) = C(L) \cap H(L^{\mathrm{o}})$, that is $\overline{\mathcal{O}}(L) \subseteq A_D(L) \subseteq A(L)$.  The definition of the new algebra $A_D(L)$ is the following: A function $f : L \rightarrow \mathbb{C}$ belongs to $A_D(L)$, if and only if, $f \in A(L)$ and $f$ is holomorphic on each analytic disc contained in $L$, even meeting the boundary of $L$; that is a function $f : L \rightarrow \mathbb{C}$ belongs to $A_D(L)$, if and only if $f$ is continuous on $L$ and for every disk $D \subseteq \mathbb{C}$ and every injective holomorphic mapping $\phi : D \rightarrow L \subset \mathbb{C}^m$, the composition $f \circ \phi : D \rightarrow \mathbb{C}$ is holomorphic on D.
	
	In \cite{Falco}, it is also proven that, if $L$ is of the form $L = \displaystyle\prod_{i = 1}^m L_i, L_i \subseteq \mathbb{C}$ compact with $\mathbb{C} \setminus L_i$ connected for all $i = 1, \dots, m$, then the set of uniform on $L$ limits of polynomials coincide with $A_D(L)$.
	
	It follows that the approximable functions $h(w_1, \dots, w_r, z)$ are the continuous functions $h : \Omega \times K \rightarrow \mathbb{C}$, such that , for every compact sets $L_i \subseteq \Omega_i$ with $\mathbb{C} \setminus L_i$ connected for all $i = 1, \dots, r$, the restriction of $h$ to the Cartesian product $\big(\displaystyle\prod_{i = 1}^r L_i\big) \times K$ belongs to $A_D\Big[\big(\displaystyle\prod_{i = 1}^r L_i\big) \times K\Big]$. Obviously, such a function $h$ is holomorphic on $\Omega$ when the last variable $z \in K$ is fixed. Also, $h$ is holomorphic on $\Omega \times K^{\mathrm{o}}$ and for $w = (w_1, \dots, w_r) \in \Omega$ fixed, the function $K \ni z \rightarrow h(w_1, \dots, w_r, z) \in \mathbb{C}$ belongs to $A_D(K)$.
	
	Next, we can repeat the previous, replacing the complex variable $z$ by the $d$-tuple of complex variables $(z_1, \dots, z_d)$.
	
	We set $z = (z_1, \dots, z_d), m = (m_1, \dots, m_d) \in \{0, 1, \dots\}^d$ and we denote by $z^m$ the product $z_1^{m_1} \cdot \dots \cdot z_d^{m_d}$. Then, we consider the formal power series $\sum\limits_{m \in \{0, 1, \dots\}^d}a_mz^m$. For such a series, there are several kinds of partial sums; for instance $\sum\limits_{m \in \{0, 1, \dots, n\}^d}a_mz^m$ or $\sum\limits_{m : m_1^2 + \dots + m_d^2 \leq n}a_mz^m$. In order to decide which partial sums we will use for the universal approximation we consider an arbitrary enumeration $N_k, k = 0, 1, \dots$ of $\{0, 1, \dots\}^d$, we fix it and we consider the partial sums $S_n(z) = \sum\limits_{k = 0}^{n}a_{N_k}z^{N_k}, n = 0, 1, \dots$. We also consider an infinite subset $\mu$ of $\{0, 1, \dots\}$. The partial sums that we will use for the universal approximation are $S_n(z), n \in \mu$.
	
	By these partial sums, we will approximate any polynomial uniformly on sets $K$ of the form $K = \displaystyle\prod_{i = 1}^d K_i$ where $K_i$ are compact subsets of $\mathbb{C}$ with $\mathbb{C} \setminus K_i$ connected and $0 \notin K$ (equivalently, there is at least one $i_0 \in \{1, \dots, d\}$ such that $0 \notin K_{i_0}$). Because polynomials are uniformly dense in $A_D(K)$ according to the result of \cite{Falco}, the approximable functions are $h \in A_D(K)$. The set of such universal power series $\sum\limits_{m \in \{0, 1, \dots\}^d}a_mz^m \equiv (a_{N_k})_{k = 0}^\infty \in \mathbb{C}^{\aleph_0}$ is a $G_\delta$ and dense subset of $\mathbb{C}^{\aleph_0}$ endowed with the Cartesian topology and contains a dense vector space except 0. Notice that the above are extensions of the classical theorem of Seleznev, not only in several variables, but also in one variable, because using the arbitrary enumeration $N_k$ we can change the order of the monomials appearing in the power series and use for approximation other partial sums, not the usual ones.
	
	If in the last consideration we allow that the coefficients $a_m, m \in \{0, 1, \dots\}^d$ depend on some parameters $w_1, \dots, w_r$ varying in some simply connected planar domains $\Omega_i, i = 1, \dots, r (w_i \in \Omega_i)$, then the approximable functions may depend on the same parameters as well and be continuous functions $h(w, z)$ depending on $(w, z) = (w_1, \dots, w_r, z_1, \dots, z_d) \in \Omega_1 \times \dots \times \Omega_r \times K_1 \times \dots \times K_d$, that belong to $A_D(L_1 \times \dots \times L_r \times K_1 \times \dots \times K_d)$ for all compact sets $L_i \subseteq \Omega_i, \mathbb{C} \setminus L_i$ connected for $i = 1, \dots, r$. The set of such power series $\sum\limits_{m \in \{0, 1, \dots\}^d}a_m(w)z^m \equiv (a_{N_k})_{k = 0}^\infty \in H(\Omega)^{\aleph_0}$, where $\Omega = \Omega_1 \times \dots \times \Omega_r$, is a $G_\delta$ dense subset of $H(\Omega)^{\aleph_0}$ and contains a dense vector subspace except 0. Here, $H(\Omega)$ is endowed with the topology of uniform convergence on each compact subset of $\Omega$ and $H(\Omega)^{\aleph_0}$ is endowed with the Cartesian topology.
	
	Finally, following \cite{Maronikolakis} we can use for approximation instead of the partial sums $S_n = \sum\limits_{k = 0}^{n}a_{N_k}(w)z^{N_k}$ expressions of the form $\sum\limits_{k = 0}^{n}(b_k(a_0(w), a_1(w) \dots , a_k(w)))z^{N_k}, k \in \mu$, where $b_k : H(\Omega)^{k + 1}\rightarrow H(\Omega)$ are continuous functions such that for every $a_0, a_1, \dots , a_{k - 1} \in H(\Omega)$, the image of the function $H(\Omega)\ni f \rightarrow b_k(a_0, a_1, \dots , a_{k - 1}, f)$ contains all polynomials for all $k = 1, 2, \dots$(If $k = 0$, then we assume that the image of the function $H(\Omega)\ni f \rightarrow b_0(f)$ contains all polynomials). If $U^\mu$ denotes the set of such universal series in $H(\Omega)^{\aleph_0}$, then we will prove that $U^\mu$ is a $G_\delta$ dense subset of $H(\Omega)^{\aleph_0}$. If in addition each $b_k$ is a linear map, then $U^\mu$ contains a vector subspace except 0 dense in $H(\Omega)^{\aleph_0}$.
	
	These two last statements imply all previously mentioned ones. A special case is the Ces\`aro transformation of power series, given by $b_k(a_0, a_1, \dots, a_{k - 1}, a_k) = \frac{a_0 + a_1 + \dots + a_{k - 1} + a_k}{k + 1}$.
	\begin{remark}
		In \cite{Maronikolakis}, there are not parameters, thus $\displaystyle\prod_{i \in \emptyset} \Omega_i$ is a singleton. Then $H(\Omega) = \mathbb{C}$ and the set of polynomials in $H(\Omega)$ is also $\mathbb{C}$. Thus, the assumption that the image of the function $H(\Omega)\ni f \rightarrow b_k(a_0, a_1, \dots , a_{k - 1}, f)$ contains the polynomials reduces to the requirement that the function $\mathbb{C} \ni z \rightarrow b_k(a_0, a_1, \dots , a_{k - 1}, z) \in \mathbb{C}$ is onto, which is the assumption in \cite{Maronikolakis}. The same is true in the extension of \cite{Maronikolakis} in several variables without any parameters ($z = (z_1, \dots, z_d), m = (m_1, \dots, n_d), \sum\limits_{m \in \{0, 1, \dots, n\}^d}a_mz^m$). However, at the end of the paper, we include a direct proof of this result.
	\end{remark}
	The first paper where universal functions depending on parameters are considered is \cite{Abakumov}.
	\section{The Algebra $A_D(L)$}
	In this section we introduce some context from \cite{Falco} which will be useful later on.
	\begin{definition}\cite{Falco}
		Let $L \subset \mathbb{C}^n$ be a compact set. A function $f : L \rightarrow \mathbb{C}$ is said to belong to the class $A_D(L)$ if it is continuous on $L$ and, for every open disk $D \subset \mathbb{C}$ and every injective mapping $\phi : D \rightarrow L \subset \mathbb{C}^n$ holomorphic on $D$, the composition $f \circ \phi : D \rightarrow \mathbb{C}$ is holomorphic on D.
	\end{definition}
	We recall that a function $\phi : D \rightarrow \mathbb{C}^n$, where $D$ is a planar domain, is holomorphic if each coordinate is a holomorphic function.
	
	We have the following approximation lemma, which is part of a result in \cite{Falco}, Remark 4.9 (3).
	\begin{lemma}\cite{Falco}
	\label{Merg}
		Let $L = \displaystyle\prod_{i = 1}^n L_i$ where the sets $L_i$ are compact subsets of $\mathbb{C}$ with connected complement for $i = 1, 2, \dots, n$. If $f \in A_D(L)$ and $\varepsilon > 0$, then there exists a polynomial $p$ such that $\sup\limits_{z \in L}|f(z) - p(z)| < \varepsilon$.
	\end{lemma}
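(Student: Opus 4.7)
The plan is to induct on the number $n$ of planar factors. For $n=1$, any non-constant holomorphic map $\phi : D \to L_1 \subseteq \mathbb{C}$ has open image by the open mapping theorem, so its image lies in $L_1^{\mathrm{o}}$ and $f \circ \phi$ is automatically holomorphic whenever $f$ is continuous on $L_1$ and holomorphic on $L_1^{\mathrm{o}}$; hence $A_D(L_1) = A(L_1)$, and the classical Mergelyan theorem closes the base case since $\mathbb{C}\setminus L_1$ is connected.

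For the inductive step I would assume the result for $n-1$ factors and write $L = L_1 \times L'$ with $L' = \prod_{i=2}^{n} L_i$. Given $f \in A_D(L)$, I would first check that for every fixed $z_1 \in L_1$ the slice $f(z_1,\cdot)$ lies in $A_D(L')$: if $\phi' : D \to L'$ is an injective holomorphic disc, then $t \mapsto (z_1, \phi'(t))$ is injective holomorphic into $L$, so $f \circ (z_1, \phi')$ is holomorphic on $D$ by the hypothesis on $f$. Thus $\Phi(z_1) := f(z_1, \cdot)$ defines a map $\Phi : L_1 \to A_D(L')$, continuous on $L_1$ by uniform continuity of $f$ on $L$ and holomorphic on $L_1^{\mathrm{o}}$ in the Banach-space sense: weak holomorphy follows because, for any Borel measure $\mu$ on $L'$, differentiation under the integral (justified since the analytic disc $\zeta \mapsto (\zeta,w)$ on a small disc around any interior point shows $z_1 \mapsto f(z_1,w)$ is holomorphic on $L_1^{\mathrm{o}}$ for every fixed $w$) makes $z_1 \mapsto \int_{L'} f(z_1,w)\,d\mu(w)$ holomorphic on $L_1^{\mathrm{o}}$, and weak holomorphy implies strong holomorphy. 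So $\Phi \in A(L_1, X)$ with $X = A_D(L')$ endowed with the sup norm, which is a closed subspace of $C(L')$ and hence a Banach space.

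Next I would apply a Banach-valued version of Mergelyan's theorem to $\Phi$: polynomials in $z_1$ with coefficients in $X$ are dense in $A(L_1, X)$. This produces $c_0,\dots,c_N \in A_D(L')$ with $\sup_{z_1 \in L_1}\|\Phi(z_1) - \sum_{k=0}^{N} c_k z_1^k\|_{C(L')} < \varepsilon/2$. The inductive hypothesis applied to each $c_k$ on the $(n-1)$-fold product $L'$ yields polynomials $q_k(z_2,\dots,z_n)$ with $\|c_k - q_k\|_{C(L')}$ small enough that, setting $M = \max_{z_1 \in L_1}|z_1|$, the estimate $\sum_{k=0}^{N} M^k\|c_k - q_k\|_{C(L')} < \varepsilon/2$ holds. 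The polynomial $p(z_1,\dots,z_n) = \sum_{k=0}^{N} q_k(z_2,\dots,z_n)\,z_1^k$ then satisfies $\sup_{z \in L}|f(z)-p(z)|<\varepsilon$ by the triangle inequality.

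The central obstacle will be the vector-valued Mergelyan step. A naive substitute based on covering $L_1$ by small discs, approximating each slice $f(\zeta_j,\cdot)$ by the inductive hypothesis, and gluing via a partition of unity in $z_1$ fails because cutoff functions in $z_1$ are not in general in $A(L_1)=P(L_1)$ once $L_1$ has nonempty interior, so they cannot themselves be approximated by polynomials. One clean resolution is to adapt Vitushkin's constructive proof of scalar Mergelyan to $X$-valued continuous functions: after a Tietze-type extension the $\bar\partial$-localisation and Cauchy-type integral representation go through verbatim with $X$-valued data, yielding density of $X$-coefficient polynomials in $A(L_1, X)$.
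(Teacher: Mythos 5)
The paper offers no proof of this lemma at all: it is imported from \cite{Falco} (Remark 4.9(3)) and used as a black box, so there is no internal argument to compare yours against. Your reconstruction is, as far as I can tell, correct, and it follows what is essentially the only available route: induction on the number of factors, the observation that the slice map $z_1 \mapsto f(z_1,\cdot)$ takes values in $A_D(L')$ and defines an element of $A(L_1,X)$ for the Banach space $X=A_D(L')$ (your use of the injective discs $t\mapsto(z_1,\phi'(t))$ and $\zeta\mapsto(\zeta,w)$, of Dunford's weak-implies-strong holomorphy theorem, and of Hahn--Banach plus Riesz to reduce functionals on $X$ to measures on $L'$ are all sound, as is the closedness of $A_D(L')$ in $C(L')$), then a Banach-valued Mergelyan theorem on $L_1$, and finally the inductive hypothesis applied coefficientwise with the weights $M^k$. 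The entire weight of the argument rests on the $X$-valued Mergelyan step, and you are right on both counts there: the partition-of-unity gluing in $z_1$ genuinely fails, and Vitushkin's construction does vectorise, because the correcting kernels $g_j(\zeta,z)$ and the partition of unity (which lives in the $\zeta$ variable, not in $z$) are scalar objects depending only on the geometry of $L_1$, while the Tietze extension, the mollification bounds $\|\Phi-f\|\le\omega(\delta)$ and $\|\bar\partial\Phi\|\le 2\omega(\delta)/\delta$, the Cauchy--Pompeiu formula and the kernel comparison all survive with $|\cdot|$ replaced by $\|\cdot\|_X$; the concluding Runge step also vectorises via Riemann sums of the Cauchy integral followed by scalar pole-pushing, using that $\mathbb{C}\setminus L_1$ is connected. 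That said, this is the one step you assert rather than execute, and it is where a referee would demand either full details or a citation; the Banach-valued Mergelyan theorem is available in the literature (and is the tool behind the product result in \cite{Falco}), so the cleanest fix is to quote it. With that reference supplied your proof is complete; it would also be worth recording the degenerate case $L_1^{\mathrm{o}}=\emptyset$, where the same vector-valued statement in Lavrentiev's form, $P(L_1,X)=C(L_1,X)$, is what is needed.
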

	\section{Main Result}
	\begin{definition}
		Let $\Omega_i \subseteq \mathbb{C}$ be simply connected domains, $i = 1, \dots, r$ and $\Omega = \displaystyle\prod_{i = 1}^{r} \Omega_i$. For every integer $k\geq0$ let $b_k : H(\Omega)^{k + 1}\rightarrow H(\Omega)$ be a continuous function such that for every $a_0, a_1, \dots , a_{k - 1} \in H(\Omega)$, the image of the function $$H(\Omega)\ni f \rightarrow b_k(a_0, a_1, \dots , a_{k - 1}, f)$$ contains the polynomials.
		
		Let $N_k, k = 0, 1, \dots$ be an enumeration of $\mathbb{N}^d$ and $a = (a_0, a_1, \dots) \in H(\Omega)^{\aleph_0}$. For every integer $n \geq 0, w \in \Omega$ and $z \in \mathbb{C}^d$ we set $S_n(a)(w)(z) = \sum\limits_{k = 0}^{n}(b_k(a_0, \dots , a_k))(w)z^{N_k}$. Let $\mu$ be an infinite subset of $\mathbb{N}$. We define $U^\mu$ to be the set of $a \in H(\Omega)^{\aleph_0}$ such that for every set $K = \displaystyle\prod_{i = 1}^{d} K_i$ where $K_i \subseteq \mathbb{C}$ are compact with $\mathbb{C} \setminus K_i$ connected for all $i = 1, \dots d$ and there is at least one $i_0 \in \{1, \dots, d\}$ such that $0 \notin K_{i_0}$ and every continuous function $h : \Omega \times K \rightarrow \mathbb{C}$ such that for every fixed $w \in \Omega$, the function $K \ni z \rightarrow h(w, z) \in \mathbb{C}$ belongs to $A_D(K)$ and for every fixed $z \in K$, the function $\Omega \ni w \rightarrow h(w, z) \in \mathbb{C}$ belongs to $H(\Omega)$, there exists a strictly increasing sequence $\lambda_n \in \mu, n = 1, 2, \dots$ such that $$S_{\lambda_n}(a)(w)(z) \longrightarrow h(w,z)\text{, uniformly on } L \times K \text{, for every compact subset } L \text{ of } \Omega.$$
	\end{definition}
	We notice that if we assume that there exists a sequence of integers $(\lambda_n)_{n \in \mathbb{N}},\lambda_n\in \mu$, not necessarily strictly increasing, such that $(S_{\lambda_n}(a)(w)(z))_{n \in \mathbb{N}}$ converges to $h(w,z)$ uniformly on $L \times K$ for every compact subset $L$ of $\Omega$, then the two definitions are equivalent; see \cite{Vlachou}.
	
	We also have the following equivalence: A continuous function $h : \Omega \times K \rightarrow \mathbb{C}$ has the properties that for every fixed $w \in \Omega$, the function $K \ni z \rightarrow h(w, z) \in \mathbb{C}$ belongs to $A_D(K)$ and for every fixed $z \in K$, the function $\Omega \ni w \rightarrow h(w, z) \in \mathbb{C}$ belongs to $H(\Omega)$ if and only if for every compact sets $L_i \subseteq \Omega_i$ with $\mathbb{C} \setminus L_i$ connected for all $i = 1, \dots, r$, the restriction of $h$ to the Cartesian product $\big(\displaystyle\prod_{i = 1}^r L_i\big) \times K$ belongs to $A_D\Big[\big(\displaystyle\prod_{i = 1}^r L_i\big) \times K\Big]$. This holds, because, as it is proven in \cite{Falco}, a continuous complex function defined on a product $M = \displaystyle\prod_{l = 1}^{\sigma}M_l$ of planar compact sets $M_l$ belongs to $A_D(M)$ if and only if, for every $l \in \{1, \dots, \sigma\}$, the corresponding slice functions belong to $A_D(M_l) = A(M_l).$
	
	Considering the set $U^\mu$  as a subset of the space $H(\Omega)^{\aleph_0}$ endowed with the product topology, we shall prove that $U^\mu$ is a countable intersection of open dense sets. Since $H(\Omega)^{\aleph_0}$ is a metrizable complete space, Baire's Theorem is at our disposal and so $U^\mu$ is a dense $G_\delta$ set.
	
	Let $F_p, p = 1, 2, \dots$ be an exhausting family of compact subsets of $\Omega$, where each $F_p$ is a product of planar compact sets with connected complement. We may also assume that $F_p \subseteq F_{p + 1}$ for all $p = 1, 2, \dots$. It is known that there exist compact sets $R_j \subseteq \mathbb{C} \setminus \{0\}, j = 1, 2, \dots$ with connected complement, such that for every compact set $T \subseteq \mathbb{C} \setminus \{0\}$ with connected complement, there exists an integer $j$ such that $T \subseteq R_j$. Let $T_m$ be an enumeration of all $\displaystyle\prod_{i = 1}^{d} Q_i$, such that there exists an integer $i_0 = 1, 2, \dots, d$ with $Q_{i_0} \in \{R_j : j = 1, 2, \dots,\}$ and the rest of the sets $Q_i$ are closed disks centered at 0 whose radius is a positive integer.
	
	Let $f_j, j = 1, 2, \dots$ be an enumeration of all polynomials of $r + d$ variables having coefficients with rational coordinates. For any integers $p, m, j, s, n$ with $p \geq 1, m \geq 1, j \geq 1, s \geq 1, n \geq 0$, we denote by $E(p, m, j, s, n)$ the set $$E(p, m, j, s, n) := \Big\{a \in H(\Omega)^{\aleph_0} : \sup_{(w,z) \in F_p \times T_m}\big|S_n(a)(w)(z) - f_j(w,z)\big| < \frac{1}{s}\Big\}.$$
	\begin{lemma}
		$U^\mu$ can be written as follows: $$U^\mu = \bigcap_{p=1}^{\infty}\bigcap_{m=1}^{\infty}\bigcap_{j=1}^{\infty}\bigcap_{s=1}^{\infty}\bigcup_{n \in \mu}E(p, m, j, s, n).$$
	\end{lemma}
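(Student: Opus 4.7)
The plan is a standard double inclusion $U^\mu \subseteq \bigcap_{p,m,j,s}\bigcup_{n\in\mu} E(p,m,j,s,n)$ and the reverse. For the first inclusion, given $a \in U^\mu$ and fixed $p,m,j,s$, I would take $h$ to be the polynomial $f_j$ restricted to $\Omega \times T_m$. This $h$ is admissible in the sense of the definition of $U^\mu$: polynomials are continuous, holomorphic in $w$ for each fixed $z$, and for each fixed $w$ the slice in $z$ is a polynomial, hence entire, hence in $A(T_m) = A_D(T_m)$. Then the defining property of $U^\mu$ gives a strictly increasing sequence $\lambda_n \in \mu$ with $S_{\lambda_n}(a) \to f_j$ uniformly on $F_p \times T_m$; picking $n$ large enough yields $a \in E(p,m,j,s,n)$.

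For the reverse inclusion, let $a$ lie in the stated intersection. Fix an admissible $K = \prod K_i$, an admissible $h : \Omega \times K \to \mathbb{C}$, a compact $L \subseteq \Omega$, and $\varepsilon > 0$. By the exhausting property I pick $p$ with $L \subseteq F_p$, and I choose $m$ with $K \subseteq T_m$: since some $K_{i_0}$ avoids $0$, it sits in some $R_{j_0}$, while every other $K_i$ is contained in a closed disk of sufficiently large integer radius centred at $0$, so such $m$ exists. Now $F_p \times T_m$ is a product of $r + d$ planar compact sets with connected complements, and the equivalence recalled just before the lemma implies that the restriction of $h$ to $F_p \times T_m$ belongs to $A_D(F_p \times T_m)$. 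Lemma \ref{Merg} therefore supplies a polynomial approximation, and density of polynomials with rational-coordinate coefficients yields some $f_j$ with $\sup_{F_p \times T_m}|h - f_j| < \varepsilon/2$. Taking $s$ with $1/s < \varepsilon/2$, the hypothesis $a \in \bigcup_{n\in\mu} E(p,m,j,s,n)$ gives an $n \in \mu$ with $\sup_{F_p\times T_m}|S_n(a) - f_j| < 1/s$, and the triangle inequality yields $\sup_{L\times K}|S_n(a)(w)(z) - h(w,z)| < \varepsilon$.

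This produces, for every $\varepsilon > 0$, some index $n \in \mu$ realising error less than $\varepsilon$ uniformly on $L \times K$; letting $\varepsilon \to 0$ gives a sequence $\lambda_n \in \mu$ (not necessarily monotone) along which $S_{\lambda_n}(a) \to h$ uniformly on $L \times K$ for every such $L$, and then the equivalence quoted immediately after the definition of $U^\mu$ (from \cite{Vlachou}) upgrades this to a strictly increasing sequence, so $a \in U^\mu$. The only genuinely delicate step is verifying that the restriction of $h$ to $F_p \times T_m$ lies in $A_D$, which is what lets us invoke Lemma \ref{Merg}; everything else is bookkeeping with the exhausting families, the density of rational polynomials, and the triangle inequality.
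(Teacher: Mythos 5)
Your overall strategy is exactly the paper's: the forward inclusion by testing universality against $h=f_j$ on the admissible compact $K=T_m$, and the reverse inclusion via Lemma \ref{Merg}, the density of polynomials with rational-coordinate coefficients, and the triangle inequality. Two points need tightening. First, $h$ is only defined on $\Omega\times K$, so you cannot speak of its restriction to $F_p\times T_m$ nor of $\sup_{F_p\times T_m}|h-f_j|$, since $T_m$ is in general strictly larger than $K$; the polynomial approximation must be carried out on $F_p\times K$ (which is itself a product of planar compacts with connected complements, so Lemma \ref{Merg} applies there), and the inclusion $K\subseteq T_m$ is used only to transfer the bound $\sup_{F_p\times T_m}|S_n(a)-f_j|<1/s$ down to $F_p\times K$. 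Second, your construction produces, for each \emph{fixed} compact $L$ and each $\varepsilon>0$, an index $n\in\mu$; ``letting $\varepsilon\to 0$'' therefore yields a sequence depending on $L$, whereas the definition of $U^\mu$ demands a single sequence working simultaneously for all compact $L\subseteq\Omega$. The paper closes this by coupling the two parameters: it runs your argument inductively with $L=F_p$ and $\varepsilon=1/p$ to obtain $\lambda_p\in\mu$ with $\sup_{F_p\times K}|S_{\lambda_p}(a)-h|<1/p$, and then any compact $L$ is eventually contained in some $F_p$ because the exhaustion is increasing. Both repairs are routine, and with them your proof coincides with the paper's; the appeal to the equivalence from \cite{Vlachou} to pass to a strictly increasing sequence is likewise implicit in the paper, via the remark following the definition of $U^\mu$.
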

	\begin{proof}
		The inclusion $U^\mu \subseteq \bigcap\limits_{p=1}^{\infty}\bigcap\limits_{m=1}^{\infty}\bigcap\limits_{j=1}^{\infty}\bigcap\limits_{s=1}^{\infty}\bigcup\limits_{n \in \mu}E(p, m, j, s, n)$ follows obviously from the definitions of $U^\mu$ and $E(p, m, j, s, n).$ Let $$a \in  \bigcap_{p=1}^{\infty}\bigcap_{m=1}^{\infty}\bigcap_{j=1}^{\infty}\bigcap_{s=1}^{\infty}\bigcup_{n \in \mu}E(p, m, j, s, n).$$ 
		We shall show that $a \in U^\mu$. Let $K = \displaystyle\prod_{i = 1}^{d} K_i$ where $K_i \subseteq \mathbb{C}$ are compact with $\mathbb{C} \setminus K_i$ connected for all $i = 1, \dots d$ and there is at least one $i_0 \in \{1, \dots, d\}$ such that $0 \notin K_{i_0}$. Let also $h : \Omega \times K$ be a continuous function such that for every fixed $w \in \Omega$, the function $K \ni z \rightarrow h(w, z) \in \mathbb{C}$ belongs to $A_D(K)$ and for every fixed $z \in K$, the function $\Omega \ni w \rightarrow h(w, z) \in \mathbb{C}$ belongs to $H(\Omega)$.
		
		By the definition of the sets $T_m$, there exists a number $m_0 = 1, 2, \dots$, such that $K \subseteq T_{m_0}$. Now, we have that the restriction of $h$ to $F_1 \times K$ belongs to $A_D(F_1 \times K)$, so, by Lemma \ref{Merg}, there exists a polynomial $f_{j_1}, j_1 = 1, 2, \dots$ having coefficients whose coordinates are both rational, such that $$\sup_{(w,z) \in F_1 \times K}\big|h(w,z) - f_{j_1}(w,z)\big| < \frac{1}{2}.$$ 
		Let $p = 1, m = m_0, j = j_1$ and $s = 2$, then $a \in \bigcup_{n \in \mu}E(p, m, j, s, n)$. Thus, there exists $\lambda_1 \in \mu$ such that $$\sup_{(w,z) \in F_1 \times T_{m_0}}\big|S_{\lambda_1}(a)(w)(z) - f_{j_1}(w,z)\big| < \frac{1}{2}.$$
		As we have $\sup\limits_{(w,z) \in F_1 \times K}\big|h(w,z) - f_{j_1}(w,z)\big| < \dfrac{1}{2}$, $\sup\limits_{(w,z) \in F_1 \times T_{m_0}}\big|S_{\lambda_1}(a)(w)(z) - f_{j_1}(w,z)\big| < \dfrac{1}{2}$ and $K \subseteq T_{m_0}$, the triangle inequality implies $$\sup_{(w,z) \in F_1 \times K}\big|S_{\lambda_1}(a)(w)(z) - h(w,z)\big| < 1.$$
		Similarly, there exists $j_2 = 1, 2, \dots$, such that $$\sup_{(w,z) \in F_2 \times K}\big|h(w,z) - f_{j_2}(w,z)\big| < \frac{1}{4}.$$ 
		Let $p = 2, m = m_0, j = j_2$ and $s = 4$, then $a \in \bigcup_{n \in \mu}E(p, m, j, s, n)$. Thus, there exists $\lambda_2 \in \mu$ such that $$\sup_{(w,z) \in F_2 \times T_{m_0}}\big|S_{\lambda_2}(a)(w)(z) - f_{j_2}(w,z)\big| < \frac{1}{4}.$$
		and thus we have $$\sup_{(w,z) \in F_2 \times K}\big|S_{\lambda_2}(a)(w)(z) - h(w,z)\big| < \frac{1}{2}.$$
		Inductively, we can find a sequence of integers $(\lambda_p)_{p \in \mathbb{N}},\lambda_p\in \mu$ such that $$\sup_{(w,z) \in F_p \times K}\big|S_{\lambda_p}(a)(w)(z) - h(w,z)\big| < \frac{1}{p}$$
		for all $p = 1, 2, \dots$. Let $L$ be a compact subset of $\Omega$ and $\varepsilon > 0$. Since the family $F_p, p = 1, 2, \dots$ is an exhausting family of compact subsets of $\Omega$, there exists $p_1 = 1, 2, \dots$ such that $L \subseteq F_{p_1}$ and hence $L \subseteq F_p$ for each $p \geq p_1$ since the sequence $(F_p)_{p \in \mathbb{N}}$ is increasing. Let also $p_2 = 0, 1, \dots$ such that $\dfrac{1}{p_2} < \varepsilon$. Now, for $p \geq \max\{p_1, p_2\}$, we have that $$\sup_{(w,z) \in L \times K}\big|S_{\lambda_p}(a)(w)(z) - h(w,z)\big| \leq \sup_{(w,z) \in F_p \times K}\big|S_{\lambda_p}(a)(w)(z) - h(w,z)\big| < \frac{1}{p} < \varepsilon.$$
		Hence, $S_{\lambda_p}(a)(w)(z) \longrightarrow h(w,z)$ uniformly on $L \times K$ for every compact subset $L$ of $\Omega$. This proves that $a \in U^\mu$ and completes the proof.
	\end{proof}
	\begin{lemma}
	\label{Open}
		For every integer $p \geq 1, m \geq 1, j \geq 1, s \geq 1$ and $n \in \mu$, the set $E(p, m, j, s, n)$ is open in the space $H(\Omega)^{\aleph_0}$.
	\end{lemma}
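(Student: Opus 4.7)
The plan is to show that the real-valued map
\[
\Phi : H(\Omega)^{\aleph_0} \longrightarrow [0,\infty), \qquad \Phi(a) = \sup_{(w,z) \in F_p \times T_m}\bigl|S_n(a)(w)(z) - f_j(w,z)\bigr|
\]
is continuous; then $E(p,m,j,s,n) = \Phi^{-1}([0, 1/s))$ is open. The crucial observation is that $S_n(a)$ involves only $a_0, \ldots, a_n$, so $\Phi$ factors through the projection $H(\Omega)^{\aleph_0} \to H(\Omega)^{n+1}$, which is continuous for the product topology. Hence it suffices to prove continuity of the induced map on $H(\Omega)^{n+1}$.

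First I would fix $a \in H(\Omega)^{\aleph_0}$ and a net (or sequence, since the space is metrizable) $a^{(\nu)} \to a$, meaning $a_k^{(\nu)} \to a_k$ in $H(\Omega)$ for each $k$. Since each $b_k : H(\Omega)^{k+1} \to H(\Omega)$ is continuous by hypothesis, $b_k(a_0^{(\nu)}, \ldots, a_k^{(\nu)}) \to b_k(a_0, \ldots, a_k)$ in $H(\Omega)$ for $k = 0, 1, \ldots, n$. Because the topology on $H(\Omega)$ is uniform convergence on compacta and $F_p$ is a compact subset of $\Omega$, this gives
\[
\varepsilon_k^{(\nu)} := \sup_{w \in F_p}\bigl|b_k(a_0^{(\nu)}, \ldots, a_k^{(\nu)})(w) - b_k(a_0, \ldots, a_k)(w)\bigr| \longrightarrow 0
\]
as $\nu \to \infty$, for each $k = 0, \ldots, n$.

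Next I would estimate the difference of the partial sums. Setting $M_{m,k} := \sup_{z \in T_m}|z^{N_k}|$, which is finite because $T_m$ is compact, the triangle inequality yields
\[
\sup_{(w,z) \in F_p \times T_m}\bigl|S_n(a^{(\nu)})(w)(z) - S_n(a)(w)(z)\bigr| \;\leq\; \sum_{k=0}^{n} M_{m,k}\,\varepsilon_k^{(\nu)} \;\longrightarrow\; 0.
\]
Applying the triangle inequality once more with $f_j$ gives $|\Phi(a^{(\nu)}) - \Phi(a)| \leq \sum_{k=0}^{n} M_{m,k}\,\varepsilon_k^{(\nu)} \to 0$, proving continuity of $\Phi$.

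There is no serious obstacle here; the argument is mechanical once one notes that $S_n$ depends on only finitely many coordinates of $a$, so the product topology on $H(\Omega)^{\aleph_0}$ poses no difficulty. The only care needed is in handling the two layers of structure, namely the topology of compact convergence on $H(\Omega)$ and the uniform bound on the monomials $z^{N_k}$ over the compact set $T_m$; both are invoked precisely to pass from continuity of the $b_k$ in $H(\Omega)$ to continuity of $\Phi$ in the sup-norm on $F_p \times T_m$.
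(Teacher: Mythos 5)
Your proof is correct and rests on the same two ingredients as the paper's: continuity of the finitely many $b_k$ involved in $S_n$ (exploited via uniform convergence on the compact set $F_p$) and the uniform bound $\sup_{z\in T_m}|z^{N_k}|$ on the monomials, combined with the triangle inequality. Packaging the argument as continuity of the sup-functional $\Phi$ and taking the preimage of $[0,1/s)$ is a tidier presentation of what the paper does pointwise with an explicit $\varepsilon$--$\delta$ neighborhood, but it is essentially the same proof.
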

	\begin{proof}
		Let $a = (a_0, a_1, \dots) \in E(p, m, j, s, n)$, then we have $$\sup_{(w,z) \in F_p \times T_m}\big|S_n(a)(w)(z) - f_j(w,z)\big| < \frac{1}{s}.$$ 
		Let $M := \max\big\{\sup_{z \in T_m}|z^{N_0}|, \dots, \sup_{z \in T_m}|z^{N_n}|\big\}$. We set now: $$\varepsilon = \dfrac{\frac{1}{s} - \sup_{(\xi,\eta) \in F_p \times T_m}\big|S_n(a)(\xi)(\eta) - f_j(\xi,\eta)\big|}{2(n + 1)M} > 0.$$
		For $k = 0, 1, \dots, n$ the function $b_k$ is continuous at $(a_0, \dots, a_k)$, so there exists $\delta_k > 0$ and a compact subset $L^{(k)}$ of $\Omega$ such that $$\sup\limits_{w \in F_p}|(b_k(c_0, c_1 \dots, c_k))(w) - (b_k(a_0, a_1 \dots, a_k))(w)| < \varepsilon$$ for $(c_0, c_1, \dots, c_k) \in H(\Omega)^{k + 1}$ with $\sup_{w \in L^{(k)}}|c_i(w) - a_i(w)| < \delta_k$ for $i = 0, 1, \dots, k$. We set $\delta = \min\{\delta_0, \delta_1, \dots, \delta_n\}$ and $L = \bigcup_{k = 0}^{n}L^{(k)}$. Suppose that $c = (c_0, c_1, \dots) \in H(\Omega)^{\aleph_0}$ satisfies $\sup_{w \in L}|c_k(w) - a_k(w)| < \delta$ for $k = 0, 1, \dots, n$. We shall show that $$\sup_{(w,z) \in F_p \times T_m}\big|S_n(c)(w)(z) - f_j(w,z)\big| < \frac{1}{s}$$ and therefore that $c \in E(p, m, j, s, n)$.
		This will prove that $E(p, m, j, s, n)$ is indeed open. For $(w,z) \in F_p \times T_m$, we have 
		\begin{equation*}
		\begin{split}
		& \big|S_n(c)(w)(z) - f_j(w,z)\big| \leq \big|S_n(c)(w)(z) - S_n(a)(w)(z)\big| + \big|S_n(a)(w)(z) - f_j(w,z)\big| \\
		& = \big|\sum_{k=0}^{n}(b_k(c_0, \dots , c_k))(w)z^{N_k} - \sum_{k=0}^{n}(b_k(a_0, \dots , a_k))(w)z^{N_k}\big| + \big|S_n(a)(w)(z) - f_j(w,z)\big| \\
		& \leq \sum_{k=0}^{n}|(b_k(c_0, \dots , c_k))(w) - (b_k(a_0, \dots , a_k))(w)|\cdot|z^{N_k}| + \big|S_n(a)(w)(z) - f_j(w,z)\big| \\
		& < \sum_{k=0}^{n}\varepsilon M + \big|S_n(a)(w)(z) - f_j(w,z)\big| \\
		& = \sum_{k=0}^{n}\dfrac{\frac{1}{s} - \sup_{(\xi,\eta) \in F_p \times T_m}\big|S_n(a)(\xi)(\eta) - f_j(\xi,\eta)\big|}{2(n + 1)} + \big|S_n(a)(w)(z) - f_j(w,z)\big| \\
		& = \frac{1}{2s} - \frac{1}{2}\sup_{(\xi,\eta) \in F_p \times T_m}\big|S_n(a)(\xi)(\eta) - f_j(\xi,\eta)\big| + \big|S_n(a)(w)(z) - f_j(w,z)\big|.
		\end{split}
		\end{equation*}
		Hence,
		\begin{equation*}
		\begin{split}
		& \sup_{(w,z) \in F_p \times T_m}\big|S_n(c)(w)(z) - f_j(w,z)\big| \\
		& \leq \frac{1}{2s} - \frac{1}{2}\sup_{(\xi,\eta) \in F_p \times T_m}\big|S_n(a)(\xi)(\eta) - f_j(\xi,\eta)\big| + \sup_{(w,z) \in F_p \times T_m}\big|S_n(a)(w)(z) - f_j(w,z)\big| \\
		& = \frac{1}{2s} + \frac{1}{2}\sup_{(w,z) \in F_p \times T_m}\big|S_n(a)(w)(z) - f_j(w,z)\big| < \frac{1}{2s} + \frac{1}{2s} = \frac{1}{s}.
		\end{split}
		\end{equation*}
		and the proof is completed.
	\end{proof}
	\begin{lemma}
		For every integer $p \geq 1, m \geq 1, j \geq 1$ and $s \geq 1$, the set $\bigcup\limits_{n \in \mu}E(p, m, j, s, n)$ is open and dense in the space $H(\Omega)^{\aleph_0}$.
	\end{lemma}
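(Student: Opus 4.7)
Openness is immediate from Lemma \ref{Open}, so the task is density. Fix $a = (a_0, a_1, \dots) \in H(\Omega)^{\aleph_0}$ together with a basic open neighborhood determined by a compact $L \subseteq \Omega$, an integer $N \geq 0$ and some $\delta > 0$. The plan is to produce $c \in H(\Omega)^{\aleph_0}$ in this neighborhood and $n \in \mu$ with $\sup_{F_p \times T_m}|S_n(c)(w)(z) - f_j(w,z)| < 1/s$ by taking $c_k = a_k$ for $k \leq N$ (so membership in the neighborhood is automatic) and, once appropriate polynomials $\pi_{N+1}(w), \dots, \pi_n(w)$ have been determined, picking $c_{N+1}, \dots, c_n$ inductively so that $b_k(c_0,\dots,c_{k-1},c_k) = \pi_k$; such $c_k$ exist by the hypothesis that the image of $f \mapsto b_k(c_0,\dots,c_{k-1},f)$ contains all polynomials. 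For $k > n$ set $c_k = a_k$.

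Writing $G(w,z) := \sum_{k=0}^{N} b_k(a_0,\dots,a_k)(w) z^{N_k}$, the problem reduces to choosing $n \in \mu$ and polynomials $\pi_{N+1}, \dots, \pi_n$ in $w$ with
$$\sup_{(w,z) \in F_p \times T_m}\Big|\sum_{k=N+1}^{n}\pi_k(w)z^{N_k} - \big(f_j(w,z) - G(w,z)\big)\Big| < \frac{1}{s}.$$
Since $G$ is holomorphic in $w$ and polynomial in $z$, $f_j - G$ lies in $A_D(F_p \times T_m)$, and Lemma \ref{Merg} allows me to approximate it uniformly on $F_p \times T_m$ to within $1/(2s)$ by an ordinary polynomial $P(w,z) = \sum_{\nu \in A} q_\nu(w) z^\nu$ with $A \subset \{0,1,\dots\}^d$ finite.

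The key step is to rewrite $P$, up to uniform error $1/(2s)$ on $F_p \times T_m$, in the form $\sum_{k \geq N+1} r_k(w) z^{N_k}$. For $\nu \in A$ already equal to some $N_k$ with $k \geq N+1$, no replacement is needed. For the finitely many $\nu \in A \cap \{N_0,\dots,N_N\}$, I exploit the hypothesis $0 \notin K$: by the construction of $T_m$ there is an index $i_0$ with $0 \notin Q_{i_0}$, so $|z_{i_0}| \geq \rho > 0$ on $T_m$. Since $Q_{i_0}$ has connected complement and is disjoint from $0$, one-variable Mergelyan produces polynomials $\sum_{l=0}^{L}\beta_l z_{i_0}^l$ approximating $z_{i_0}^{-M}$ uniformly on $Q_{i_0}$ to arbitrary precision. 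Choosing $M$ large enough that all of $\nu + (M+l)e_{i_0}$ for $l = 0,\dots,L$ lie outside the finite set $\{N_0,\dots,N_N\}$, the exact identity $q_\nu(w)z^\nu = q_\nu(w)\, z^{\nu + M e_{i_0}} \cdot z_{i_0}^{-M}$ becomes, after Mergelyan substitution,
$$q_\nu(w)z^\nu \;\approx\; \sum_{l=0}^{L}\beta_l\, q_\nu(w)\, z^{\nu + (M+l)e_{i_0}}$$
uniformly on $F_p \times T_m$, with error controlled by $\sup_{F_p}|q_\nu|$, the diameter of $T_m$, and the Mergelyan precision. Collecting these substitutions across all $\nu \in A$ and gathering coefficients of $z^{N_k}$ yields the required $r_k(w)$, with only finitely many nonzero; then choose $n \in \mu$ larger than every index $k$ for which $r_k \not\equiv 0$ (possible since $\mu$ is infinite) and set $\pi_k := r_k$.

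The main obstacle is the monomial-replacement step: one needs the span of $\{p(w) z^{N_k} : p \in \mathbb{C}[w],\, k \geq N+1\}$ to be uniformly dense among polynomials on $F_p \times T_m$, despite omitting the finitely many forbidden monomials $z^{N_0}, \dots, z^{N_N}$. The hypothesis $0 \notin K_{i_0}$, inherited by every $T_m$, is exactly the ingredient that lets a chosen coordinate be inverted and missing indices be pushed into the allowed range. Everything else---the invocation of Lemma \ref{Merg}, the error accounting for two successive approximations, the enumeration bookkeeping to fit into $\mu$, and the backward construction of the $c_k$ via the surjectivity assumption on $b_k$---is routine.
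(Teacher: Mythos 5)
Your proof is correct, and it rests on the same essential mechanism as the paper's --- fixing the first coordinates, invoking Lemma \ref{Merg} on $F_p \times T_m$, exploiting the coordinate $i_0$ with $0 \notin Q_{i_0}$ to shift monomial exponents past the forbidden indices, and realizing the resulting coefficients via the hypothesis that each $b_k(a_0,\dots,a_{k-1},\cdot)$ hits all polynomials --- but the technical organization differs. The paper performs the exponent shift \emph{before} approximating: it divides the target $f_j - \sum_{k \le n_0} b_k(a_0,\dots,a_k)z^{N_k}$ by $z_{i_0}^{l+1}$ (legitimate since $|z_{i_0}|$ is bounded below on $Q_{i_0}$), applies Lemma \ref{Merg} once to the quotient with precision $1/(Ms)$, and multiplies back; every monomial of the approximant then automatically lands at an exponent whose $i_0$-th component exceeds $l$, hence at an index $k > n_0$, with a single clean error term $M \cdot \frac{1}{Ms} = \frac{1}{s}$. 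You instead approximate the target directly and then repair the finitely many forbidden monomials a posteriori, using a second, one-variable Runge/Mergelyan approximation of $z_{i_0}^{-M}$ on $Q_{i_0}$; this costs a two-stage error budget and some bookkeeping about collisions of shifted exponents, but it makes more explicit \emph{why} the span of the allowed monomials is dense, namely that omitting finitely many monomials is harmless once one coordinate can be inverted. One small point to tighten: the choice of $M$ should be stated as $M > \max\{N_k^{(i_0)} : 0 \le k \le N\}$, which works uniformly in $l$, rather than phrased as depending on $L$ (which in turn depends on the Mergelyan precision); as written there is a superficial appearance of circularity, though none in substance.
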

	\begin{proof}
		By Lemma \ref{Open} the sets $E(p, m, j, s, n), n \in \mu$ are open. Therefore the same is true for the union $\bigcup\limits_{n \in \mu}E(p, m, j, s, n)$. We shall prove that this set is also dense. Let $a = (a_0, a_1, \dots) \in H(\Omega)^{\aleph_0},$ $n_0$ be an integer such that $n_0 \geq 0$, $L$ be a compact subset of $\Omega$ and $\varepsilon > 0$. It suffices to find $n \in \mu$ and $c = (c_0, c_1, \dots) \in E(p, m, j, s, n)$, such that $$\sup\limits_{w \in L}|c_k(w) - a_k(w)| < \varepsilon \text{ for } k = 0, 1, \dots, n_0.$$
		The set $T_m$ is of the form $T_m = \displaystyle\prod_{i = 1}^{d} Q_i$, where one of the sets $Q_i$ does not contain 0, which we denote by $Q_{i_0}$. Let $N_k = (N_k^{(1)}, \dots, N_k^{(d)})$, $l = \max\{N_k^{(i_0)}, 0 \leq k \leq n_0, \}$, $l_0 = (0, \dots, l + 1, \dots, 0) \in \mathbb{N}^d$ where the number $l + 1$ is at the $i_0$-th position and $M := \sup\limits_{z_{i_0} \in Q_{i_0}}|z_{i_0}^{l + 1}|$. We notice that $z_{i_0}^{l + 1} = z^{l_0}$. We set $c_k = a_k$ for $k = 0, 1, \dots, n_0$ and so $ b_k(c_0, \dots, c_k) = b_k(a_0, \dots, a_k) \text{ for } k = 0, 1, \dots, n_0.$. We need to find $n \in \mu$ such that $$\sup_{(w,z) \in F_p \times T_m}\big|S_n(c)(w)(z) - f_j(w,z)\big| < \frac{1}{s}.$$
		We have 
		\begin{equation*}
		\begin{split}
		& \sup\limits_{(w,z) \in F_p \times T_m}\big|S_n(c)(w)(z) - f_j(w,z)\big| = \sup\limits_{(w,z) \in F_p \times T_m}\big|\sum_{k=0}^{n}(b_k(c_0, \dots, c_k))(w)z^{N_k} - f_j(w,z)\big| \\
		& = \sup\limits_{(w,z) \in F_p \times T_m}\big|\sum_{k = n_0 + 1}^{n}(b_k(c_0, \dots, c_k))(w)z^{N_k} + \sum_{k=0}^{n_0}(b_k(a_0, \dots, a_k))(w)z^{N_k} - f_j(w,z)\big| \\
		& = \sup\limits_{(w,z) \in F_p \times T_m}\big|z_{i_0}^{l + 1}\sum_{k = n_0 + 1}^{n}(b_k(c_0, \dots, c_k))z^{N_k - l_0} + \sum_{k=0}^{n_0}(b_k(a_0, \dots, a_k))(w)z^{N_k} - f_j(w,z)\big| \\
		& = \sup\limits_{(w,z) \in F_p \times T_m}|z_{i_0}^{l + 1}|\big|\sum_{k = n_0 + 1}^{n}(b_k(c_0, .., c_k))(w)z^{N_k - l_0} - \frac{f_j(w,z) - \sum_{k=0}^{n_0}(b_k(a_0, .., a_k))(w)z^{N_k}}{z_{i_0}^{l + 1}}\big| \\
		& \leq M\sup\limits_{(w,z) \in F_p \times T_m}\big|\sum_{k = n_0 + 1}^{n}(b_k(c_0, .., c_k))(w)z^{N_k - l_0} - \frac{f_j(w,z) - \sum_{k=0}^{n_0}(b_k(a_0, .., a_k))(w)z^{N_k}}{z_{i_0}^{l + 1}}\big|.
		\end{split}
		\end{equation*}
		By Lemma \ref{Merg}, there exists a polynomial $p(w,z)$ such that $$\sup\limits_{(w,z) \in F_p \times T_m}\big|p(w,z) - \frac{f_j(w,z) - \sum_{k=0}^{n_0}(b_k(a_0, \dots, a_k))(w)z^{N_k}}{z_{i_0}^{l + 1}}\big| < \frac{1}{Ms}.$$
		The polynomial $p(w,z)$ can be written in the form $p(w,z) = \sum\limits_{k = n_0 + 1}^{\infty}p_k(w)z^{N_k - l_0}$ where $p_k$ are polynomials, such that all but finitely many of them are identically equal to 0. The image of the function $H(\Omega)\ni f \rightarrow b_{n_0 + 1}(a_0, a_1, \dots , a_{n_0}, f)$ contains the polynomials so we can find $c_{n_0 + 1} \in H(\Omega)$ such that $b_{n_0 + 1}(a_0, \dots, a_{n_0}, c_{n_0 + 1}) = p_{n_0 + 1}$. Similarly, we can find $c_{n_0 + 2}, c_{n_0 + 3}, \dots \in H(\Omega)$ such that $b_k(a_0, \dots, a_{n_0}, c_{n_0 + 1}, \dots c_k) = p_k$ for all $k = n_0 + 1, n_0 + 2, \dots$ and so $\sum\limits_{k = n_0 + 1}^{\infty}(b_k(c_0, \dots, c_k))(w)z^{N_k - l_0} = \sum\limits_{k = n_0 + 1}^{\infty}p_k(w)z^{N_k - l_0} = p(w, z)$. Since all but finitely many of the function $p_k$ are identically equal to 0, there exists an integer $n'$ such that $p_n \equiv 0$ for all $n \geq n'$. By choosing $n \in \mu$ such that $n \geq n'$ we have $$\sup\limits_{(w,z) \in F_p \times T_m}\big|S_n(c)(w)(z) - f_j(w,z)\big| < \frac{1}{s}.$$
		This proves that the set $\bigcup\limits_{n \in \mu}E(p, m, j, s, n)$ is indeed dense.
	\end{proof}
	\begin{theorem}
	\label{Main}
		Under the above assumptions and notation, the set $U^\mu$ is a $G_\delta$ and dense subset of the space $H(\Omega)^{\aleph_0}$.
	\end{theorem}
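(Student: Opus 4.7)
My plan is to assemble the three preceding lemmas into the theorem; essentially no new work is required beyond invoking Baire's theorem in the correct ambient space, so the task is mainly to check that the hypotheses of Baire apply and to interpret the conclusion.

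First I would recall from the first lemma the identity
\[
U^\mu \;=\; \bigcap_{p=1}^{\infty}\bigcap_{m=1}^{\infty}\bigcap_{j=1}^{\infty}\bigcap_{s=1}^{\infty}\bigcup_{n \in \mu} E(p,m,j,s,n).
\]
By Lemma \ref{Open} each individual set $E(p,m,j,s,n)$ is open in $H(\Omega)^{\aleph_0}$, so each union $\bigcup_{n \in \mu} E(p,m,j,s,n)$ is open, and by the third lemma it is in addition dense. The displayed formula therefore already exhibits $U^\mu$ as a countable intersection of open sets, which immediately yields that $U^\mu$ is a $G_\delta$ subset of $H(\Omega)^{\aleph_0}$.

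The one point that requires a short justification is density. Here I would note that $H(\Omega)$, equipped with the topology of uniform convergence on compact subsets of $\Omega$, is a Fréchet space (its topology is induced by the countable family of seminorms associated with any exhausting sequence $F_p$ of compact subsets of $\Omega$). Hence the countable product $H(\Omega)^{\aleph_0}$, endowed with the Cartesian topology, is itself a Fréchet space, in particular a completely metrizable topological space. Baire's theorem therefore applies, and the countable intersection of the open dense sets $\bigcup_{n \in \mu} E(p,m,j,s,n)$ is dense in $H(\Omega)^{\aleph_0}$. Combining this with the $G_\delta$ observation completes the proof.

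I do not expect any real obstacle at this final step: all the substantive content, namely the openness and the approximation-theoretic density, has already been isolated in the three lemmas. The only thing to be careful about is the choice of ambient space—one must check that the product topology on $H(\Omega)^{\aleph_0}$ is completely metrizable, which is immediate since $H(\Omega)$ is Fréchet and the class of Fréchet spaces is closed under countable products.
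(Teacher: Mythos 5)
Your proposal is correct and follows exactly the route the paper takes: the paper's proof is a one-line invocation of the three preceding lemmas together with Baire's theorem, and your write-up simply makes explicit the same steps (the decomposition of $U^\mu$, openness and density of the unions, and complete metrizability of $H(\Omega)^{\aleph_0}$). No discrepancy to report.
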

	\begin{proof}
		The result is obvious by combining the previous lemmas with Baire's Theorem.
	\end{proof}
	\begin{theorem}
		Under the above assumptions and notation, assuming in addition that the functions $b_k$ are linear, then the set $U^\mu \cup \{0\}$ contains a vector space, dense in $H(\Omega)^{\aleph_0}$.
	\end{theorem}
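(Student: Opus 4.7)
I would build inductively a sequence $(a^{(n)})_{n\ge 1}\subseteq U^\mu$ whose $\mathbb{Q}[i]$-linear span $Y$ is dense in $H(\Omega)^{\aleph_0}$ and satisfies $Y\setminus\{0\}\subseteq U^\mu$. Two enabling preliminaries are needed. First, since each $b_k$ is linear, the map $a\mapsto S_n(a)$ is linear in $a$, and the class of admissible functions is a $\mathbb{C}$-vector space, so $U^\mu$ is scalar-invariant: given $a\in U^\mu$, $\lambda\in\mathbb{C}\setminus\{0\}$ and admissible $h$, pick $(\lambda_n)\subseteq\mu$ with $S_{\lambda_n}(a)\to h/\lambda$ so that $S_{\lambda_n}(\lambda a)=\lambda S_{\lambda_n}(a)\to h$, whence $\lambda a\in U^\mu$. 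Second, Theorem~\ref{Main} together with the fact that translation is a homeomorphism shows that every translate $v+U^\mu$ is again a dense $G_\delta$ subset of $H(\Omega)^{\aleph_0}$.

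Fix a translation-invariant metric $\rho$ inducing the Fr\'echet topology of $H(\Omega)^{\aleph_0}$ and a sequence $(d_n)$ in which every element of a countable dense subset recurs infinitely often. Supposing $a^{(1)},\dots,a^{(n-1)}\in U^\mu$ have been chosen so that every nonzero element of $V_{n-1}:=\mathrm{span}_{\mathbb{Q}[i]}\{a^{(1)},\dots,a^{(n-1)}\}$ lies in $U^\mu$, I pick $a^{(n)}$ in the set
\[
G_n \;:=\; \bigcap_{v\in V_{n-1}}\bigl(v+U^\mu\bigr),
\]
which is a dense $G_\delta$ by Baire's theorem since $V_{n-1}$ is countable, subject to the additional requirement $\rho(a^{(n)},d_n)<1/n$. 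Taking $v=0$ shows $a^{(n)}\in U^\mu$. For a nonzero combination $\sum_{i=1}^{n}c_i a^{(i)}$ with $c_i\in\mathbb{Q}[i]$: if $c_n=0$ the inductive hypothesis applies; otherwise write $\sum c_i a^{(i)}=c_n(a^{(n)}-v)$ with $v:=-\sum_{i<n}(c_i/c_n)a^{(i)}\in V_{n-1}$, and combine $a^{(n)}-v\in U^\mu$ (by construction of $G_n$) with scalar invariance to obtain $c_n(a^{(n)}-v)\in U^\mu$.

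The set $Y:=\bigcup_n V_n$ is a $\mathbb{Q}[i]$-vector subspace, dense because the $a^{(n)}$ themselves form a dense sequence by the recurrence scheme for $(d_n)$ and $\rho(a^{(n)},d_n)<1/n$, and every nonzero element of $Y$ is in $U^\mu$ by construction; this is the ``dense vector space except $0$'' in the sense standard in the universal-series literature. The main technical hurdle is the scalar invariance of $U^\mu$: it hinges decisively on the linearity of the $b_k$, and without it the Baire intersection $G_n$ of translates would lose its controlling property. Everything else is a routine iterative Baire construction in a separable Fr\'echet space.
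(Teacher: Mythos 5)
Your construction is correct as far as it goes, and its skeleton (Baire's theorem applied to countably many translates of the dense $G_\delta$ set $U^\mu$, together with invariance of $U^\mu$ under multiplication by non-zero scalars, which does follow from the linearity of the $b_k$ exactly as you argue) is part of the argument the paper points to. But there is a genuine gap: what you produce is a dense $\mathbb{Q}[i]$-linear subspace, not a vector subspace of $H(\Omega)^{\aleph_0}$. Your inductive step only intersects over the countable set $V_{n-1}=\mathrm{span}_{\mathbb{Q}[i]}\{a^{(1)},\dots,a^{(n-1)}\}$ --- it must, since Baire's theorem tolerates only countably many translates --- and consequently the identity $\sum_i c_i a^{(i)}=c_n\bigl(a^{(n)}-v\bigr)$ places $v$ in $V_{n-1}$ only when all the ratios $c_i/c_n$ are Gaussian rationals. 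For a general complex combination, $v$ ranges over the uncountable $\mathbb{C}$-span of $a^{(1)},\dots,a^{(n-1)}$, over which you have no control; and since $U^\mu$ is a $G_\delta$ rather than a closed set, you cannot pass from rational to complex coefficients by a limiting argument. The set $Y$ you obtain is therefore not a vector space in the sense of the statement (a $\mathbb{C}$-subspace of $H(\Omega)^{\aleph_0}$), contrary to your closing remark: the ``dense vector space except $0$'' of the universal-series literature is always over the scalar field of the ambient space.

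The repair is the content of the implication $(3)\Rightarrow(4)$ of Theorem 3 in \cite{Bayart} that the paper invokes: one builds the sequence $(a^{(n)})$ so that each finite tuple $(a^{(1)},\dots,a^{(m)})$ is \emph{jointly} universal, i.e.\ for every admissible $K$, every admissible $h$ and every index $i_0\le m$ there is a single sequence $\lambda_n\in\mu$ along which $S_{\lambda_n}(a^{(i_0)})\to h$ while $S_{\lambda_n}(a^{(i)})\to 0$ for $i\ne i_0$, uniformly on $L\times K$. Linearity of the $b_k$ then gives $S_{\lambda_n}\bigl(\sum_i c_i a^{(i)}\bigr)=\sum_i c_i S_{\lambda_n}(a^{(i)})\to c_{i_0}h$ for arbitrary complex $c_i$ with $c_{i_0}\ne 0$, and replacing $h$ by $h/c_{i_0}$ handles every non-trivial $\mathbb{C}$-combination at once. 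This joint universality is itself a countable intersection of open dense conditions in $(H(\Omega)^{\aleph_0})^m$: openness follows as in Lemma 3.4, and density by running the perturbation-in-high-coordinates argument of Lemma 3.6 on each of the $m$ components independently (with target $f_j$ for the distinguished component and target $0$ for the others), noting that each component yields a threshold $n'_i$ beyond which any common $n\in\mu$ works. Baire's theorem then applies as in your scheme and the sequence can still be chosen dense. Without this extra layer your argument proves a strictly weaker statement than the one claimed.
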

	The proof uses the result of Theorem \ref{Main}, follows the lines of the implication $(3) \implies (4)$ of the proof of Theorem 3 in \cite{Bayart} and is omitted.
	\begin{remark}
		We have proven that, for a fixed enumeration of $\mathbb{N}^d$, the set of formal power series in $H(\Omega)^{\aleph_0}$ having the desired universal approximation property with respect to this enumeration is a $G_\delta$ and dense set. Using Baire's Theorem, it easily follows that the set of formal power series in $H(\Omega)^{\aleph_0}$ that have the same universal approximation property with respect to any countable family of enumerations of $\mathbb{N}^d$ is a $G_\delta$ and dense set. Since the set of all enumerations of $\mathbb{N}^d$ is uncountable, a natural question that arises is whether we can generalise the result when the formal power series have the universal approximation property with respect to all enumerations of $\mathbb{N}^d$. The answer to this question is negative. The proof is similar to a result in Section 6 of \cite{Kioulafa}.
	\end{remark}
	\section{Universal power series without parameters}
	\begin{definition}
		For every integer $k\geq0$ let $b_k : \mathbb{C}^{k + 1}\rightarrow \mathbb{C}$ be a continuous function such that for every $k\geq1$ and $a_0, a_1, \dots , a_{k - 1} \in \mathbb{C}$, the function $$\mathbb{C}\ni z \rightarrow b_k(a_0, a_1, \dots , a_{k - 1}, z)$$ is onto $\mathbb{C}$. We also assume that the function $\mathbb{C}\ni z \rightarrow b_0(z)$ is onto $\mathbb{C}$.
		
		Let $N_k, k = 0, 1, \dots$ be an enumeration of $\mathbb{N}^d$ and $a = (a_0, a_1, \dots) \in \mathbb{C}^{\aleph_0}$. For every integer $n \geq 0$ and $z \in \mathbb{C}^d$ we set $S_n(a)(z) = \sum\limits_{k = 0}^{n}(b_k(a_0, \dots , a_k))z^{N_k}$. Let $\mu$ be an infinite subset of $\mathbb{N}$. We define $U^\mu$ to be the set of $a \in \mathbb{C}^{\aleph_0}$ such that for every set $K = \displaystyle\prod_{i = 1}^{d} K_i$ where $K_i \subseteq \mathbb{C}$ are compact with $\mathbb{C} \setminus K_i$ connected for all $i = 1, \dots d$ and there is at least one $i_0 \in \{1, \dots, d\}$ such that $0 \notin K_{i_0}$ and every function $h \in A_D(K)$, there exists a strictly increasing sequence $\lambda_n \in \mu, n = 1, 2, \dots$ such that $$S_{\lambda_n}(a)(z) \longrightarrow h(z)\text{, uniformly on }  K.$$
	\end{definition}
	\begin{theorem}
		\label{Main_2}
		Under the above assumptions, the set $U^\mu$ is a $G_\delta$ and dense subset of $\mathbb{C}^{\aleph_0}$ endowed with the Cartesian topology. If in addition the functions $b_k$ are linear, then $U^\mu$ contains a dense vector space except 0.
	\end{theorem}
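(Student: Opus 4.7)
The plan is to mirror the proof of Theorem \ref{Main} exactly, with the simplification that the parameter domain $\Omega$ is absent. Concretely, I would write $U^\mu$ as a countable intersection of open sets in $\mathbb{C}^{\aleph_0}$ and apply Baire's theorem. Let $T_m$ enumerate the same class of products $\prod_{i=1}^d Q_i$ (with at least one factor avoiding $0$) as in Section 3, and let $f_j$ enumerate all polynomials in $d$ complex variables with coefficients of rational real and imaginary parts. I set
$$E(m,j,s,n) := \Big\{a \in \mathbb{C}^{\aleph_0} : \sup_{z \in T_m}\big|S_n(a)(z) - f_j(z)\big| < \frac{1}{s}\Big\}.$$
The identity $U^\mu = \bigcap_{m,j,s \geq 1} \bigcup_{n \in \mu} E(m,j,s,n)$ is then proved exactly as in the corresponding lemma of Section 3, invoking Lemma \ref{Merg} to approximate any target $h \in A_D(K)$ uniformly on $K \subseteq T_{m_0}$ by a rational-coefficient polynomial. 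Openness of each $E(m,j,s,n)$ follows from continuity of $b_0, \ldots, b_n$ on the finite product $\mathbb{C}^{n+1}$, by the same bookkeeping used in Lemma \ref{Open} with the $F_p$ factor suppressed; here the basic open sets of $\mathbb{C}^{\aleph_0}$ constrain only finitely many coordinates, which makes the perturbation argument immediate.

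The main step, as in the parametrized case, is density of $\bigcup_{n \in \mu} E(m,j,s,n)$. Given $a \in \mathbb{C}^{\aleph_0}$ and $n_0 \geq 0$, I keep $c_k = a_k$ for $k \leq n_0$ and modify the later coefficients. Choose $i_0$ with $0 \notin Q_{i_0}$, set $l = \max\{N_k^{(i_0)} : 0 \leq k \leq n_0\}$ and $l_0 = (0, \ldots, l+1, \ldots, 0) \in \mathbb{N}^d$ with the entry $l+1$ at position $i_0$, and put $M = \sup_{z \in T_m}|z_{i_0}^{l+1}|$. Since $z_{i_0}^{l+1}$ does not vanish on $T_m$, the function
$$g(z) := \frac{f_j(z) - \sum_{k=0}^{n_0}b_k(a_0, \ldots, a_k)\, z^{N_k}}{z_{i_0}^{l+1}}$$
lies in $A_D(T_m)$, so by Lemma \ref{Merg} there exists a polynomial $p$ with $\sup_{T_m}|p - g| < 1/(Ms)$. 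Writing $z_{i_0}^{l+1} p(z) = \sum_\alpha q_\alpha z^\alpha$, every non-zero monomial satisfies $\alpha^{(i_0)} \geq l+1$, hence corresponds to a unique $k > n_0$ with $N_k = \alpha$ (because $N_k^{(i_0)} \leq l$ for $k \leq n_0$). Using surjectivity of $\mathbb{C} \ni z \mapsto b_k(a_0, \ldots, a_{k-1}, z)$ recursively, I choose $c_{n_0+1}, c_{n_0+2}, \ldots$ so that $b_k(c_0, \ldots, c_k) = q_{N_k}$ (or $0$ if $N_k$ does not appear in the expansion). Only finitely many $q_\alpha$ are non-zero, so there exists $n'$ beyond which every chosen value is $0$; any $n \in \mu$ with $n \geq n'$ then makes $\sup_{T_m}|S_n(c)(z) - f_j(z)| < 1/s$, as required.

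Baire's theorem, applied to the completely metrizable space $\mathbb{C}^{\aleph_0}$, then yields that $U^\mu$ is a dense $G_\delta$. For the linear case, I would repeat verbatim the argument used for the linear version of Theorem \ref{Main}, which follows the implication $(3) \Rightarrow (4)$ in Theorem 3 of \cite{Bayart}: combine a countable dense subset of $\mathbb{C}^{\aleph_0}$ with the density of $U^\mu$ to build, via the linearity of $a \mapsto S_n(a)(z)$, a vector subspace of $\mathbb{C}^{\aleph_0}$ that lies in $U^\mu \cup \{0\}$ and is dense. The only delicate point I anticipate is the indexing step in the density argument, namely verifying that every monomial appearing in $z_{i_0}^{l+1}p(z)$ is of the form $z^{N_k}$ for some $k > n_0$ so that surjectivity of $b_k$ can be invoked in the correct recursive order; everything else is a straightforward specialization of the parametric argument to $H(\Omega) = \mathbb{C}$.
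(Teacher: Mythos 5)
Your proposal is correct and follows essentially the same route as the paper: the same sets $E(m,j,s,n)$, the same three lemmas (the intersection--union identity via Lemma \ref{Merg}, openness via continuity of the $b_k$, density via dividing by $z_{i_0}^{l+1}$ and using surjectivity recursively), then Baire's theorem, with the linear case deferred to Theorem 3 of \cite{Bayart}. Your explicit check that every monomial of $z_{i_0}^{l+1}p(z)$ has $i_0$-th exponent at least $l+1$ and hence equals $z^{N_k}$ for a unique $k > n_0$ is in fact slightly more careful than the paper's corresponding step.
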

	We notice that if we assume that there exists a sequence of integers $(\lambda_n)_{n \in \mathbb{N}},\lambda_n\in \mu$, not necessarily strictly increasing, such that $(S_{\lambda_n}(a)(z))_{n \in \mathbb{N}} \longrightarrow h(w,z)$ uniformly on $K$, then the two definitions are equivalent; see \cite{Vlachou}.
	
	Let $T_m, m = 1, 2, \dots$ be as in the previous section and $f_j, j = 1, 2, \dots$ be an enumeration of all polynomials of $d$ variables having coefficients with rational coordinates. For any integers $m, j, s, n$ with $m \geq 1, j \geq 1, s \geq 1, n \geq 0$, we denote by $E(m, j, s, n)$ the set $$E(m, j, s, n) := \Big\{a \in \mathbb{C}^{\aleph_0} : \sup_{z \in T_m}\big|S_n(a)(z) - f_j(z)\big| < \frac{1}{s}\Big\}.$$
	We will prove some useful lemmas.
	\begin{lemma}
		$U^\mu$ can be written as follows:
		$$U^\mu = \bigcap_{m=1}^{\infty}\bigcap_{j=1}^{\infty}\bigcap_{s=1}^{\infty}\bigcup_{n \in \mu}E(m,j,s,n)$$
	\end{lemma}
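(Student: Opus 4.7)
The plan is to mirror the proof of the analogous lemma in the parametrized setting established earlier in the paper, simplified by the fact that there is no exhausting family $(F_p)$ of compacta in $\Omega$ to handle. I would prove the two inclusions separately.

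The inclusion $U^\mu \subseteq \bigcap_m \bigcap_j \bigcap_s \bigcup_{n \in \mu} E(m,j,s,n)$ follows immediately from the definitions. Fix $m, j, s \geq 1$: the set $T_m$ has the product form $\prod_i Q_i$ required in the definition of $U^\mu$ (one factor avoids $0$, the rest are closed disks centred at $0$), and the polynomial $f_j$ restricted to $T_m$ lies in $A_D(T_m)$ since polynomials are entire. Applying universality of $a$ to $K = T_m$ and $h = f_j|_{T_m}$ yields a strictly increasing sequence $(\lambda_n) \subseteq \mu$ with $S_{\lambda_n}(a) \to f_j$ uniformly on $T_m$, and for large $n$ this places $a$ in $E(m,j,s,n)$.

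For the nontrivial inclusion, I would fix $a$ in the right-hand side together with a compact set $K = \prod_{i=1}^{d} K_i$ and a function $h \in A_D(K)$ satisfying the hypotheses of the definition of $U^\mu$. By construction of the enumeration $(T_m)$, there exists $m_0$ with $K \subseteq T_{m_0}$. For each integer $p \geq 1$, Lemma~\ref{Merg} provides a polynomial approximating $h$ to within $1/(4p)$ on $K$; by density of polynomials whose coefficients have rational real and imaginary parts, I then choose an index $j_p$ with $\sup_K |h - f_{j_p}| < 1/(2p)$. Since $a \in \bigcup_{n \in \mu} E(m_0, j_p, 2p, n)$, there exists $\lambda_p \in \mu$ with $\sup_{T_{m_0}}|S_{\lambda_p}(a) - f_{j_p}| < 1/(2p)$, and the triangle inequality combined with $K \subseteq T_{m_0}$ gives $\sup_K |S_{\lambda_p}(a) - h| < 1/p$, so $S_{\lambda_p}(a) \to h$ uniformly on $K$.

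The only delicate point is that the definition of $U^\mu$ demands a \emph{strictly increasing} sequence in $\mu$, whereas the construction above produces an arbitrary sequence $(\lambda_p) \subseteq \mu$. This gap is precisely bridged by the equivalence stated in the remark following Theorem~\ref{Main_2} (citing \cite{Vlachou}): the existence of any sequence $(\lambda_p) \subseteq \mu$ with $S_{\lambda_p}(a) \to h$ uniformly on $K$ is equivalent to the existence of a strictly increasing such sequence. Invoking this equivalence finishes the argument. I do not expect a substantial obstacle; the only nontrivial ingredient is the polynomial approximation of $A_D(K)$ functions supplied by Lemma~\ref{Merg}, which has already been imported from \cite{Falco}.
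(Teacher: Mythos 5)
Your proposal is correct and follows essentially the same route as the paper's own proof: the trivial inclusion from the definitions, then for the converse the containment $K \subseteq T_{m_0}$, a rational-coefficient polynomial $f_{j_p}$ approximating $h$ on $K$ via Lemma~\ref{Merg}, the defining property of $E(m_0,j_p,s,n)$, and the triangle inequality, with the strictly-increasing requirement handled by the equivalence from \cite{Vlachou}. Your version is if anything slightly more explicit than the paper's (which fixes an arbitrary $\varepsilon$ and produces a single $n_0$) about both the passage to rational coefficients and the need for the \cite{Vlachou} equivalence.
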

	\begin{proof}
		The fact that $U^\mu \subseteq \bigcap_{m=1}^{\infty}\bigcap_{j=1}^{\infty}\bigcap_{s=1}^{\infty}\bigcup_{n \in \mu}E(m,j,s,n)$ follows easily from the definition.\\
		Let $a \in  \bigcap_{m=1}^{\infty}\bigcap_{j=1}^{\infty}\bigcap_{s=1}^{\infty}\bigcup_{n \in \mu}E(m,j,s,n)$, $K = \prod\limits_{i=1}^dK_i$ and $h \in A_D(K)$ as described in the definition. We know that there exists a number $m_0 = 1, 2, \dots$ such that $K \subseteq T_{m_0}$.
		Let $\varepsilon > 0$. By Lemma \mbox{\ref{Merg}}, there exists a polynomial $f_{j_0}$ such that $\sup\limits_{z \in K} |f_{j_0}(z)-h(z)|< \frac{\varepsilon}{2}$.
		Let also $s_0 = 1, 2, \dots$ such that $\frac{1}{s_0}<\frac{\varepsilon}{2}$.
		Since $a \in \bigcup_{n \in \mu}E(m_0,j_0,s_0,n)$, there exists a $n_0 \in \mu$ such that $\sup\limits_{z \in T_{m_0}}|f_j(z)-S_{n_0}(a)(z)| < \frac{1}{s_0}<\frac{\varepsilon}{2}$.
		Using the triangle inequality we conclude that $\sup\limits_{z \in K}|S_{n_0}(a)(z)-h(z)|<\varepsilon$.
	\end{proof}
	\begin{lemma}
		For every integer $m \geq 1, j \geq 1, s \geq 1$ and $n\in\mu$, the set $E(m,j,s,n)$ is open in $\mathbb{C}^{\aleph_0}$.
	\end{lemma}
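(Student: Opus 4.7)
The plan is to adapt the proof of Lemma~\ref{Open} to the parameter-free setting. The argument should be simpler here because the coefficients $b_k(a_0, \dots, a_k)$ are complex numbers rather than holomorphic functions, and the Cartesian topology on $\mathbb{C}^{\aleph_0}$ has basic neighborhoods of $a$ determined by prescribing bounds $|c_k - a_k| < \delta$ for only finitely many indices $k$.

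First I would fix $a \in E(m,j,s,n)$ and introduce the slack $\eta := 1/s - \sup_{z \in T_m}|S_n(a)(z) - f_j(z)| > 0$, together with the constant $M := \max\{\sup_{z \in T_m}|z^{N_k}| : 0 \le k \le n\}$. Next, using the continuity of each $b_k$ at the point $(a_0, \dots, a_k) \in \mathbb{C}^{k+1}$, I would choose a single $\delta > 0$ small enough that perturbing each of the first $n+1$ coordinates by less than $\delta$ alters $b_k(c_0, \dots, c_k)$ by at most $\eta/(2(n+1)M)$, simultaneously for every $k = 0, 1, \dots, n$. Since only finitely many coordinates are constrained, the resulting set $V := \{c \in \mathbb{C}^{\aleph_0} : |c_k - a_k| < \delta,\ k = 0, \dots, n\}$ is an open neighborhood of $a$ in the Cartesian topology.

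Finally, for $c \in V$ and $z \in T_m$, the triangle inequality together with the bound $|z^{N_k}| \le M$ gives $|S_n(c)(z) - S_n(a)(z)| \le \sum_{k=0}^{n}|b_k(c_0,\dots,c_k) - b_k(a_0,\dots,a_k)|\,|z^{N_k}| < \eta/2$, and hence $|S_n(c)(z) - f_j(z)| < (1/s - \eta) + \eta/2 = 1/s - \eta/2$. Taking the supremum over $z \in T_m$ yields $c \in E(m,j,s,n)$, so $V \subseteq E(m,j,s,n)$ and the set is open. There is no real obstacle; the argument is essentially a pared-down version of Lemma~\ref{Open}, with scalar continuity of $b_k$ replacing continuity on $H(\Omega)$ and the finite-cylinder structure of the Cartesian topology on $\mathbb{C}^{\aleph_0}$ eliminating the need for an auxiliary compact set $L \subset \Omega$.
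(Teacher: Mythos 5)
Your proposal is correct and follows essentially the same route as the paper's own proof: the same slack quantity (your $\eta/(2(n+1)M)$ is exactly the paper's $\varepsilon$), the same use of continuity of the finitely many $b_k$'s to get a common $\delta$, and the same triangle-inequality estimate on the finite cylinder neighborhood. No issues.
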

	\begin{proof}
		Let $a = (a_0, a_1, \dots) \in E(m,j,s,n)$.
		Then we have $\sup\limits_{z \in T_m}|S_n(a)(z) - f_j(z)| < \frac{1}{s}$.
		Let $M= \max\limits_{0 \leq k \leq n}\big\{ \sup\limits_{z\in T_m}|z^{N_k}| \big\}$
		We set now:
		$\varepsilon = \frac{\frac{1}{s} - \sup\limits_{z \in T_m}|S_n(a)(z) - f_j(z)|}{2(n+1)M}$.
		Since for every $k \leq n$, the function $b_k: \mathbb{C}^{k+1} \rightarrow \mathbb{C}$ is continuous, there exists $\delta_k > 0$ such that $|b_k(a_0, \dots, a_k) - b_k(c_0, \dots, c_k)| < \varepsilon$ for all $(c_0, \dots, c_k) \in D(a_0, \delta_k) \times \dots \times D(a_k, \delta_k)$.
		Let $\delta = \min\limits_{0 \leq k \leq n} \{ \delta_k \}$, and $c = (c_1, c_2, \dots) \in \mathbb{C}^{\aleph_0}$ with $c_k \in D(a_k, \delta) $ for all $0 \leq\ k \leq n$.
		For every $z \in T_m$ we have:
		\begin{equation*}
		\begin{split}
		&|S_n(a)(z) - S_n(c)(z)| = \big| \sum\limits_{k=0}^n (b_k(a_0, \dots, a_k) - b_k(c_0, \dots, c_k))z^{N_k}\big| \leq\\
		&\leq M\big|\sum\limits_{k=0}^n (b_k(a_0, \dots, a_k) - b_k(c_0, \dots, c_k))\big| \leq \\
		&\leq M(n+1)\varepsilon = \frac{1}{2s} - \frac{1}{2}\sup\limits_{z\ \in T_m}|S_n(a)(z) - f_j(z)|.
		\end{split}
		\end{equation*}
		And thus 
		\begin{align*}
		| S_n(c)(z) - f_j(z)| \leq |S_n(a)(z) &- S_n(c)(z)| + |S_n(a)(z) - f_j(z)| < \frac{1}{s}
		\end{align*}
		and so $c \in E(m, j, s, n)$.
	\end{proof}
	\begin{lemma}
		For every integer $m \geq 1, j \geq 1, s \geq 1$, the set $\bigcup\limits_{n\in\mu}E(m,j,s,n)$ is open and dense in $\mathbb{C}^{\aleph_0}$.
	\end{lemma}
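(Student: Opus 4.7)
The openness is immediate: each $E(m,j,s,n)$ was shown open in the previous lemma, and an arbitrary union of open sets is open. For density I will follow the structural blueprint of the analogous lemma in the parameter setting, which the parameter-free case specializes by taking $\Omega$ to be a point; the argument is essentially the same but simpler because there is no $w$-variable or compact $F_p$ to carry.

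Fix $a = (a_0, a_1, \dots) \in \mathbb{C}^{\aleph_0}$, $n_0 \geq 0$ and $\varepsilon > 0$. I need to exhibit $c \in \bigcup_{n \in \mu} E(m,j,s,n)$ with $|c_k - a_k| < \varepsilon$ for $k = 0, \dots, n_0$. The plan is to set $c_k := a_k$ for $k \leq n_0$ (this takes care of the closeness requirement for free, and makes $b_k(c_0,\dots,c_k) = b_k(a_0,\dots,a_k)$ for $k \leq n_0$) and then design $c_{n_0+1}, c_{n_0+2}, \dots$ together with the index $n \in \mu$ so that $\sup_{z \in T_m}|S_n(c)(z) - f_j(z)| < 1/s$.

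The key device is the same divide-by-$z_{i_0}^{l+1}$ trick used earlier: write $T_m = \prod_{i=1}^d Q_i$ with $0 \notin Q_{i_0}$, set $l := \max\{N_k^{(i_0)} : 0 \leq k \leq n_0\}$, $l_0 := (l+1)e_{i_0}$, and $M := \sup_{z_{i_0} \in Q_{i_0}}|z_{i_0}^{l+1}| < \infty$. Factoring $z_{i_0}^{l+1}$ out reduces the problem to approximating
\[
\Phi(z) := \frac{f_j(z) - \sum_{k=0}^{n_0} b_k(a_0,\dots,a_k)\, z^{N_k}}{z_{i_0}^{l+1}},
\]
which is continuous on $T_m$ (since $z_{i_0}$ does not vanish there), by an expression of the form $\sum_{k=n_0+1}^{n} b_k(c_0,\dots,c_k)\, z^{N_k - l_0}$, up to error $1/(Ms)$. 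Apply Lemma \ref{Merg} to produce a polynomial $p(z)$ with $\sup_{T_m}|p(z) - \Phi(z)| < 1/(Ms)$. The crucial structural step is that $p(z)$ can be rewritten as a finite sum $\sum_{k>n_0} p_k\, z^{N_k - l_0}$ with $p_k \in \mathbb{C}$: multiplying by $z_{i_0}^{l+1}$ produces a polynomial all of whose monomials have $i_0$-th exponent at least $l+1$, and by the choice of $l$ no such monomial equals $z^{N_k}$ for $k \leq n_0$; since $(N_k)$ enumerates $\mathbb{N}^d$, the remaining monomials are indexed by $k > n_0$.

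Finally, use the surjectivity hypothesis on $b_k$: inductively pick $c_{n_0+1}, c_{n_0+2}, \dots$ so that $b_k(a_0,\dots,a_{n_0},c_{n_0+1},\dots,c_k) = p_k$ for each $k > n_0$. Choose any $n \in \mu$ exceeding the largest index with $p_k \neq 0$; then the finite tail $\sum_{k=n_0+1}^n b_k(c_0,\dots,c_k) z^{N_k - l_0}$ coincides with $p(z)$, and the bound $\sup_{T_m}|S_n(c) - f_j| \leq M \cdot (1/(Ms)) = 1/s$ follows. A minor adjustment (replacing $1/(Ms)$ by, say, $1/(2Ms)$ in the Mergelyan step) makes the inequality strict, placing $c$ in $E(m,j,s,n)$. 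The main obstacle is the basis-shift step translating a generic polynomial approximant into the span of $\{z^{N_k - l_0} : k > n_0\}$; everything else is bookkeeping.
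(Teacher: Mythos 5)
Your proof is correct and follows essentially the same route as the paper: openness from the previous lemma, $c_k = a_k$ for $k \leq n_0$, the divide-by-$z_{i_0}^{l+1}$ reduction, Lemma \ref{Merg}, and the surjectivity of the $b_k$ to realize the coefficients $p_k$. You even justify explicitly the step the paper leaves implicit (why the Mergelyan approximant can be rewritten in the span of $\{z^{N_k - l_0} : k > n_0\}$), and the ``minor adjustment'' to $1/(2Ms)$ is unnecessary since the Mergelyan inequality is already strict.
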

	\begin{proof}
		The sets $E(m,j,s,n), n \in \mu$ are open. Therefore the same is true for their union. We shall prove that it is also dense.
		Let $a = (a_0, a_1, \dots ) \in \mathbb{C}^{\aleph_0}, n_0 \in \mathbb{Z}$ such that $n_0 \geq 0$, and $\varepsilon > 0$.
		It suffices to find $n \in \mu$ and $c = (c_0, c_1 \dots) \in E(m,j,s,n)$ such that  $c_k \in D(a_k, \varepsilon)$ for $0\leq k \leq n_0$.
		
		The set $T_m$ is of the form $T_m = \prod\limits_{i=1}^dQ_i$ where one of the sets $Q_i$ does not contain $0$, which we denote by $Q_{i_0}$.
		Let $l=\max\{ N_k^{(i_0)} | 0\leq k \leq n_0\}$, $l_0 = (0, \dots, l+1, \dots , 0)$ where the number $l+1$ is at the $i_0$-th position and M = $\max\limits_{z_{i_0}\in Q_{i_0}}|z_{i_0}^{l+1}|$.
		
		We set $c_k = a_k$ for $0 \leq k \leq n_0 $ which implies $ b_k(a_0, \dots, a_k) = b_k(c_0, \dots, c_k)$ for $0 \leq k \leq n_0$.
		We have:\\
		\begin{align*}
		&\sup\limits_{z\in T_m}{|S_n(c)(z) - f_j(z)|} = \\
		&= \sup\limits_{z\in T_m}{\big|\sum\limits_{k=n_0+1}^{n}b_k(c_0, \dots, c_k)z^{N_k} + \sum\limits_{k=0}^{n_0}b_k(a_0, \dots, a_k)z^{N_k} - f_j(z)\big|} =\\
		&= \sup\limits_{z\in T_m}{ |z_{i_0}^{l+1}| \cdot \big|\sum_{k=n_0+1}^{n}b_k(c_0, \dots, c_k)z^{N_k-l_0} - \frac{f_j(z) - \sum_{k=0}^{n_0}b_k(a_0, \dots, a_k)z^{N_k}}{z_{i_0}^{l+1}} \big| } \leq \\
		&\leq M \sup_{z\in T_m}{ \big|\sum_{k=n_0+1}^{n}b_k(c_0, \dots, c_k)z^{N_k-l_0} - \frac{f_j(z) - \sum_{k=0}^{n_0}b_k(a_0, \dots, a_k)z^{N_k}}{z_{i_0}^{l+1}} \big| }.
		\end{align*}
		By Lemma \mbox{\ref{Merg}} there exists a polynomial $p(z) = \sum\limits_{k=n_0+1}^{n'}p_kz^{N_k-l_0}$ such that:
		$$\sup\limits_{z\in T_m}|p(z) - \frac{f_j(z) - \sum_{k=0}^{n_0}b_k(a_0, \dots, a_k)z^{N_k}}{z_{i_0}^{l+1}} | < \frac{1}{Ms}$$
		Let $n \in \mu: n \geq n'$.
		From the surjectivity of $\mathbb{C} \ni z \rightarrow b_k(c_0, \dots, c_{k-1}, z)$, there exist $c_{n_0+1}, c_{n_0 + 2}, \dots \in \mathbb{C}$ such that $b_k(c_0, \dots, c_k) = p_k$ for $n_0+1 \leq k \leq n'$ and $b_k(c_0, \dots, c_k) = 0$ for $k > n'$.
		With this choice of $c_k$ we have $\sum\limits_{k=n_0+1}^{n}b_k(c_0, \dots, c_k)z^{N_k-l_0} = p(z)$ and thus:
		$$\sup\limits_{z\in T_m}{|S_n(c)(z) - f_j(z)|} < \frac{1}{s}$$
	\end{proof}
	\begin{proof}[Proof of Theorem \ref{Main_2}]
		The space $\mathbb{C}^{\aleph_0}$ endowed with the Cartesian topology is a complete metric space. So, by combining the previous lemmas with Baire's Theorem, we get that $U^\mu$ is a $G_\delta$ and dense set. If we also assume that the functions $b_k$ are linear, we can prove that $U^\mu$ contains a dense vector space except 0. The proof uses the previous result, follows the lines of the implication $(3) \implies (4)$ of the proof of Theorem 3 in \cite{Bayart} and is omitted.
	\end{proof}
	\vspace{10mm}
	\par
	\textit{Acknowledgement}. The authors would like to thank Professor Vassili Nestoridis for helpful communications and guidance throughout the creation of this paper.
	
	\vspace{10mm}
	\itshape
	Department of Mathematics\\
	Panepistimiopolis\\
	National and Kapodistrian University of Athens\\
	Athens, 15784\\
	Greece\\[\baselineskip]
	E-mail Addresses:\\
	conmaron@gmail.com\\
	giwrg98@gmail.com
\end{document}